\documentclass[11pt,a4paper]{amsart}
\usepackage{amsfonts,amsmath,amssymb,amsthm}
\usepackage{times}
\usepackage{graphics,graphicx}
\usepackage{color}
\usepackage{enumerate}

\usepackage[utf8]{inputenc}
\usepackage{bbm, dsfont}

\numberwithin{equation}{section}

\renewcommand{\epsilon}{\varepsilon}

\DeclareSymbolFont{SY}{U}{psy}{m}{n}
\DeclareMathSymbol{\emptyset}{\mathord}{SY}{'306}

\DeclareMathOperator{\const}{const}




\marginparwidth 20mm \addtolength{\textheight}{10mm}
\addtolength{\textwidth}{20mm} \addtolength{\topmargin}{-10mm}

\oddsidemargin 10mm \evensidemargin 10mm

{\bf}{\it}

\newtheorem{theorem}{Theorem}[section]{\bf}{\it}
{\bf}{\it}
{\bf}{\it}
{\bf}{\it}
{\it}{\rm}
\newtheorem{lemma}[theorem]{Lemma}{\bf}{\it}
\newtheorem{remark}[theorem]{Remark}{\it}{\rm}
{\bf}{\it}
{\bf}{\it}
{\bf}{\it}

{\bf}{\it}

{\bf}{\it}

\title[Asymptotic degree distribution of a duplication--deletion random graph model]{Asymptotic degree distribution of a duplication--deletion random graph model}

\keywords{Random Graphs; Degree Distribution; Power--law}

\author[E.~Th\"ornblad]{Erik Th\"ornblad \\ Department of Mathematics, Uppsala University}
 \address{E.~Th\"ornblad, Department of Mathematics, University of Uppsala, Box 480, S-75106 Uppsala, Sweden}
 \email{erik.thornblad@math.uu.se}


\date{\today}

\begin{document}

\begin{abstract}
 We study a discrete--time duplication--deletion random graph model and analyse its asymptotic degree distribution. The random graphs consists of disjoint cliques. In each time step either a new vertex is brought in with probability $0<p<1$ and attached to an existing clique, chosen with probability proportional to the clique size, or all the edges of a random vertex are deleted with probability $1-p$. We prove almost sure convergence of the asymptotic degree distribution and find its exact values in terms of a hypergeometric integral, expressed in terms of the parameter $p$. In the regime $0<p<\frac{1}{2}$ we show that the degree sequence decays exponentially at rate $\frac{p}{1-p}$, whereas it satisfies a power--law with exponent $\frac{p}{2p-1}$ if $\frac{1}{2}<p<1$. At the threshold $p=\frac{1}{2}$ the degree sequence lies between a power--law and exponential decay.
\end{abstract}
\maketitle

\section{Introduction}
Over the past few decades dynamic random graph models have been studied extensively. These are random graphs which evolve over time, for instance by introducing new vertices and edges (and possibly deleting them). Perhaps the most well--known example of a dynamic random graph is the preferential attachment model introduced by Barab\'asi and Albert \cite{BarabasiAlbert}. In this model new vertices are brought in, one at a time, and are connected to a single old vertex chosen with probability proportional to the degree of the target vertex. Subsequently this was analysed in among others \cite{BollobasRiordanScaleFree,MoriRandomTrees}. One of the main results on the Barab\'asi--Albert preferential attachment model is that the degree distribution satisfies a power--law almost surely, that is, that the asymptotic proportion of vertices of degree $k$ decays like $k^{-\alpha}$ for some constant $\alpha>0$. Many other models incorporating some sort of preferential attachment rule also exhibit power--law behaviour. Furthermore, it is possible to allow for more complicated dynamics without destroying the power--law property. One possible variation is to also allow for deletion of vertices or edges. Chung and Lu \cite{ChungLu2004} considered a dynamic random graph model that allowed for addition of new vertices and edges (with endpoints chosen proportionally to degree) and deletion of randomly chosen vertices or edges. They determined a number of properties of this random graph. In particular they showed that the degree distribution satisfied a power law almost surely, with the coefficient being a function of the addition and deletion probabilities. 

However, in several models allowing for edge or vertex deletion, a phase transition is seen for the asymptotic degree distribution. Typically this phase transition occurs when the probability of deletion is too high. For instance, Deijfen and Lindholm \cite{DeijfenLindholm} considered a model for which the asymptotic expected degree distribution satisfied a power--law behaviour in the regime below some threshold deletion probability, while it decayed  exponentially above this threshold value. They did however not comment on the behaviour at the threshold value. Wu, Dong, Liu and Cai \cite{WuDongLiuCai} achieved similar results and also found the asymptotic degree distribution at the threshold probability for another random graph model. The work in \cite{DeijfenLindholm, WuDongLiuCai} was subsequently extended by Vallier \cite{Vallier}, who showed that a similar phenomenon occurs in a random graph model considered by Cooper, Frieze and Vera, who in their original paper \cite{CooperFriezeVera} had restricted themselves to the power--law regime. We shall see a similar phase transition for the random graph model considered here.

In this paper we study the following random graph process. Let $0<p<1$ be fixed. Start at time $m=0$ with a graph $G_0$ consisting of a single isolated vertex. At integer times $m\geq 1$ generate $G_m$ from $G_{m-1}$ by doing one of the following steps.
\begin{enumerate}[i.]
 \item With probability $p$, do a \emph{duplication step}. Choose uniformly at random a vertex $v$ from $G_{m-1}$. Introduce a new vertex $w$ and form edges between $w$ and all the neighbours of $v$. Also form an edge between $v$ and $w$.
 \item With probability $1-p$, do a \emph{deletion step}. Choose uniformly at random a vertex from $G_{m-1}$ and make it isolated by deleting all its incident edges. 
\end{enumerate}

\begin{figure}[ht]
\def\svgwidth{400pt}
 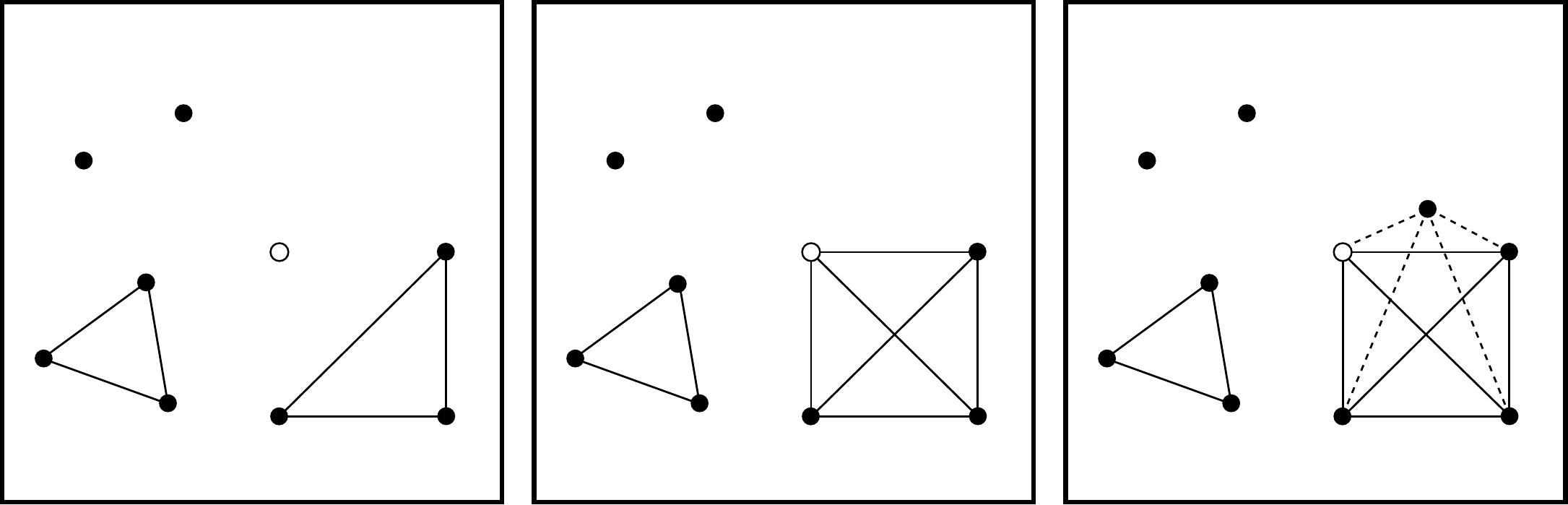
\label{figure}
\caption{A schematic of a single step. The middle figure shows the graph at some time $m-1$. The white vertex $v$ is the vertex chosen for duplication or deletion (this choice is done uniformly at random among all vertices). The right--most graph is the resulting graph if duplication occurs, which happens with probability $p$. The left--most graph is the resulting graph after a deletion step, which happens with probability $1-p$.} 
\end{figure}

The results in this paper concern the limiting degree distribution of the vertices in the graph, that is, the limiting proportion of vertices of degree $k$. We will refer to the three regimes $0<p<\frac{1}{2}$, $p=\frac{1}{2}$ and $\frac{1}{2}<p<1$ as the subcritical case, the critical case and the supercritical case respectively. Since duplication and deletion steps affect a whole clique, vertices in large cliques are more likely to have edges added and edges removed. In this sense edges are attached and deleted preferentially. The subcritical, critical and supercritical cases will require separate analysis, and we will see that a phase transition in the asymptotic degree distribution occurs, passing from exponential decay to power--law at the critical value $p=\frac{1}{2}$. 

This model is motivated by a recent paper \cite{BackhauszMori} of Backhausz and M\'ori. The random graph process introduced by them has the same deletion and duplication steps, but duplication and deletion steps are carried out alternatingly. Hence there is no parameter $p$ and no randomness in the step type. We note that we in the critical case $p=\frac{1}{2}$ on average do as many duplication steps as deletion steps. One might therefore expect that the model considered by Backhausz and M\'ori has similar properties to our model in the critical case. We will see later that the asymptotic degree distribution in the critical case agrees with the one found by Backhausz and M\'ori for their model.

In a biological context it is more natural to study the corresponding continuous--time version of the model. Such studies were done recently in \cite{ChampagnatLambert2012, ChampagnatLambert2012b} and earlier in \cite{pakes1989}. In this framework one would attach to each clique two exponential clocks, ringing at rate $k\lambda$ and $k\mu$ respectively, where $k$ is the size of the clique and $\lambda, \mu >0$ two parameters. If the first clock rings, a new vertex is added to the clique. If the second clock rings, a vertex is removed from the clique and made isolated. Indeed, the findings in the present paper agree with the results given in \cite{ChampagnatLambert2012} in the critical and supercritical cases. They do however not give a corresponding result for the subcritical case.

The rest of the paper is outlined as follows. Let $D_{m,k}$ denote the number of vertices of degree $k-1$ (that is, the number of vertices in cliques of size $k$) at time $m$, and let $N_m$ denote the number of vertices at time $m$. In Section 2 we prove that there exists a unique positive bounded sequence $(d_k)_{k=1}^{\infty}$ such that $\liminf_{m\to \infty}D_{m,k}/N_m\geq d_k$. In Section 3 we prove that the limit $\lim_{m\to \infty}D_{m,k}/N_m = d_k$ exists almost surely. The critical case was analysed already in \cite{BackhauszMori}, but in Section $4$ we consider the limit sequence $(d_k)_{k=1}^{\infty}$ for the subcritical and supercritical cases. Using Laplace's method we derive its exact values expressed in terms of certain hypergeometric integrals. We also consider the asymptotics of these integrals and show that the supercritical case gives rise to a power law with exponent $\beta=\frac{p}{2p-1}$. For the subcritical case we obtain that $d_k$ decays exponentially like $\gamma^{-k}$ where $\gamma=\frac{1-p}{p}$. Finally we show that this sequence indeed defines a probability distribution for all $0<p<1$.

\section{A bound on the limit inferior}

This section is devoted to proving the following theorem. 
\begin{theorem}
 Let $D_{m,k}$ denote the number of vertices of degree $k-1$ (that is, the number of vertices in $k$--cliques) at time $m$. Denote by $N_m$ the total number of vertices at time $m$. Then for each $k\geq 1$ we have that
\begin{align}
\liminf_{m\to \infty}\frac{D_{m,k}}{N_m} \geq d_k
\end{align}
almost surely, where $(d_k)_{k=0}^{\infty}$ is the unique positive bounded sequence satisfying
\begin{align}
d_0&=\frac{1-p}{p},  \\
d_k&=\frac{pkd_{k-1}+(1-p)kd_{k+1}}{k+p}, \quad k\geq 1. 
\end{align}
\label{thm:as:degree0} 
\end{theorem}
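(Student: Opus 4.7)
The proof splits naturally into an analytic part about the recursion and a probabilistic part about the stochastic dynamics, with the coupling between degree levels being the main technical hurdle.

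On the analytic side, I would first establish that the recursion $(k+p)d_k = kp\,d_{k-1} + k(1-p)\,d_{k+1}$, together with $d_0 = (1-p)/p$, admits a unique positive bounded solution $(d_k)$. As a second-order linear recurrence the solution space is two-dimensional, and after fixing $d_0$ one is left with a single free parameter $d_1$; the large-$k$ asymptotics are governed by the characteristic equation $(1-p)\lambda^2 - \lambda + p = 0$ whose roots are $\lambda = 1$ and $\lambda = p/(1-p)$, so a standard Perron/Pincherle-type argument singles out the unique value of $d_1$ compatible with positivity and boundedness of the whole sequence.

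On the probabilistic side, I would compute the one-step drift by conditioning on the type of step and on the size of the clique containing the uniformly chosen vertex, obtaining
\begin{equation*}
\mathbb{E}[D_{m+1,k} - D_{m,k} \mid \mathcal{F}_m] = \frac{k}{N_m}\bigl[p\,D_{m,k-1} - D_{m,k} + (1-p)\,D_{m,k+1}\bigr],\qquad k\geq 2.
\end{equation*}
For $k=1$ each non-trivial deletion step creates a new isolated vertex; after invoking $\sum_{j\geq 1} D_{m,j}=N_m$ this case also fits the general formula provided one sets $D_{m,0} := \tfrac{1-p}{p}N_m$, which is precisely why the boundary value $d_0 = (1-p)/p$ appears in the theorem. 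Introducing the centered variable $\Delta_{m,k}:=D_{m,k}-d_k N_m$ and using the $d_k$-recursion together with $\mathbb{E}[N_{m+1}-N_m \mid \mathcal{F}_m]=p$, the algebraic cancellations produce the clean identity
\begin{equation*}
\mathbb{E}[\Delta_{m+1,k} \mid \mathcal{F}_m] = \Bigl(1-\tfrac{k}{N_m}\Bigr)\Delta_{m,k} + \tfrac{kp}{N_m}\Delta_{m,k-1} + \tfrac{k(1-p)}{N_m}\Delta_{m,k+1},
\end{equation*}
with the convention $\Delta_{m,0}\equiv 0$.

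The main obstacle, and the reason the conclusion is a one-sided $\liminf$ rather than a full limit, is the coupling on the right-hand side: $\Delta_{m,k}$ is influenced by the (a priori uncontrolled) value of $\Delta_{m,k+1}$, preventing any direct supermartingale structure at level $k$. My plan to close the gap is a bootstrap induction on $k$ combined with martingale concentration. At fixed $k$ the crude bound $\Delta_{m,k+1}\geq -d_{k+1}N_m$ (from $D_{m,k+1}\geq 0$) controls the upper coupling up to a constant, while the inductive hypothesis on level $k-1$ (the base case $k=1$ exploits $\Delta_{m,0}\equiv 0$) controls the lower coupling; a Gronwall-type analysis of the drift identity then yields an $L^1$ lower bound on $\Delta_{m,k}/N_m$ matching $d_k$ to leading order. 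To lift this expectation estimate to an almost-sure $\liminf$ I would use the fact that a single step changes $D_{m,k}$ by at most $k+1$, so the Doob martingale component of $\Delta_{m,k}$ has uniformly bounded increments; Azuma--Hoeffding combined with Borel--Cantelli then gives $\Delta_{m,k}-\mathbb{E}[\Delta_{m,k}]=o(N_m)$ almost surely, and combining this with the drift estimate produces $\liminf_{m\to\infty} D_{m,k}/N_m \geq d_k$ almost surely, with the uniqueness result of Step 1 guaranteeing that the constant appearing in the bound is exactly the one stipulated in the theorem.
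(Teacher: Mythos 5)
Your drift computations are correct: the formula for $\mathbb{E}[D_{m+1,k}-D_{m,k}\mid\mathcal{F}_m]$, the convention $D_{m,0}=\frac{1-p}{p}N_m$ explaining $d_0$, and the centered identity for $\Delta_{m,k}$ all check out and are equivalent to the paper's conditional expectations for clique counts $C_{m,k}=D_{m,k}/k$. The genuine gap is in the bootstrap: a \emph{single} induction on $k$ that uses the crude bound $\Delta_{m,k+1}\geq -d_{k+1}N_m$ at the upper level cannot yield the constant $d_k$. Plugging that bound into your drift identity leaves a deterministic negative drift of size $k(1-p)d_{k+1}$ per step; since the comparison recursion $x_{m+1}=(1-\tfrac{u}{m})x_m+c$ equilibrates at $x_m\approx \tfrac{c\,m}{u+1}$ and $N_m\sim pm$, this error accumulates to the \emph{same order} as $N_m$, so the Gronwall analysis only gives $\liminf_m D_{m,k}/N_m\geq d_k-\tfrac{k(1-p)d_{k+1}}{k+p}=\tfrac{kp\,d_{k-1}}{k+p}$ (already at $k=1$ you get $\tfrac{1-p}{1+p}$ rather than $d_1$), not ``$d_k$ to leading order.'' The paper's way out is an \emph{infinite monotone iteration}: set $a_k^{(0)}=0$, define $a_k^{(j+1)}$ by applying the recursion to the round-$j$ bounds at levels $k\pm1$, prove $a_k^{(j)}\leq\liminf_m C_{m,k}/N_m$ by induction on $j$ (each round applying the Backhausz--M\'ori martingale lemma at every level), show $a_k^{(j)}$ is nondecreasing in $j$, and identify the limit with the fixed point of the recursion, i.e.\ $d_k=k\lim_j a_k^{(j)}$. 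Your scheme amounts to one round of this iteration; without iterating to the fixed point the constant-order deficit never disappears, so the stated conclusion is not reached.

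Two secondary soft spots. First, Azuma--Hoeffding controls only the martingale part of $\Delta_{m,k}$, whereas $\Delta_{m,k}-\mathbb{E}[\Delta_{m,k}]$ also contains the compensator, which is random (it involves $\Delta_{m,k\pm1}/N_m$ along the path); so ``$\Delta_{m,k}-\mathbb{E}[\Delta_{m,k}]=o(N_m)$ a.s.'' does not follow as stated. The Backhausz--M\'ori lemma used in the paper is engineered precisely to turn pathwise conditional drift inequalities with predictable random coefficients into almost-sure $\liminf$ bounds, bypassing this issue. Second, your uniqueness argument via the characteristic roots $1$ and $p/(1-p)$ is too coarse in the subcritical regime $0<p<\tfrac12$, where both roots have modulus at most one and Poincar\'e--Perron-type statements say nothing about boundedness; the paper instead shows that the difference of two bounded solutions with $\theta_0=0$, $\theta_1>0$ satisfies $\theta_k\geq\prod_{i=1}^{k-1}\bigl(1+\tfrac{p}{(1-p)i}\bigr)\theta_1$, which diverges (polynomially, like $k^{p/(1-p)}$), a growth rate invisible to a root-modulus argument. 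These two points are repairable, but the single-pass bootstrap is a structural flaw that must be replaced by the iterated (or otherwise fixed-point-reaching) scheme.
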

It will later be shown that the sequence $(d_k)_{k=1}^{\infty}$ indeed is the asymptotic degree distribution. We will prove Theorem \ref{thm:as:degree0} by proving a similar result on the clique sizes. First we find expressions for the expected number of $k$--cliques at time $m$, conditional on the graph at time $m-1$. Let $C_{m,k}$ denote the number of $k$--cliques at time $m$. Denote by $\mathcal{F}_m$ the $\sigma$--field generated by the random graph process up to and including time $m$.  Since we start with a single isolated vertex at time $0$ we have the boundary conditions $C_{0,1}=1$ and $C_{0,k}=0$ for $k\geq 2$. 

The $1$--cliques at time $m$ originate from three sources.
\begin{enumerate}
 \item A $1$--clique at time $m-1$ not chosen for duplication (recall that a $1$--clique selected for deletion is left unaffected). This happens for each $1$--clique with probability $1-p/N_{m-1}$.
 \item A $k$--clique ($k>2$) was selected for deletion. This happens with probability $(1-p)k/N_{m-1}$ for each $k$--clique and gives rise to one new $1$--clique.
 \item A $2$--clique was selected for deletion. This happens with probability $(1-p)2/N_{m-1}$ for each $2$--clique, and this gives rise to two new $1$--cliques.
\end{enumerate}
This leads to the following conditional expectation of the number of $1$--cliques.
\begin{align}
\mathbb{E}[C_{m,1} \ | \ \mathcal{F}_{m-1}]  
& \ =\  C_{m-1,1}\left(1-p\frac{1}{N_{m-1}} \right) \\ 
& \qquad + \ 4(1-p)  \frac{C_{m-1,2}}{N_{m-1}} \notag \\
& \qquad + \ (1-p)\sum_{i=3}^{m}  \frac{iC_{m-1,i}}{N_{m-1}}. \notag
\end{align}

Note that $\sum_{i=3}^{m}iC_{m-1,i}=N_{m-1}-C_{m-1,1}-2C_{m-1,2}$. Thus 
\begin{align}
\mathbb{E}[C_{m,1} \ | \ \mathcal{F}_m ] 
& \ = \ C_{m-1,1}\left(1-\frac{1}{N_{m-1}} \right)+(1-p)+2(1-p) \frac{C_{m-1,2}}{N_{m-1}}.
\label{exp:1}
\end{align}

We do a similar analysis for the number of $k$--cliques ($k\geq 2$) at time $m$. These come from three sources.
\begin{enumerate}
 \item The $k$--cliques at time $m-1$ which were not selected for duplication nor deletion. For each $k$--clique this happens with probability $1-k/N_{m-1}$.
 \item A $(k-1)$--clique at time $m-1$ was selected for duplication. This happens with probability $p(k-1)/N_{m-1}$ and gives rise to one new $k$--clique.
 \item A $(k+1)$--clique at time $m-1$ was selected for deletion. This happens with probability $(1-p)(k+1)/N_{m-1}$ and gives rise to one new $k$--clique. 
\end{enumerate}
This gives us the following conditional expectation of the number of $k$--cliques at time $m$:
\begin{align}
\mathbb{E}[C_{m,k} \ | \ \mathcal{F}_{m-1}]  \label{exp:k}
& \ =\  C_{m-1,k}\left(1-\frac{k}{N_{m-1}} \right) \\ 
& \qquad + \ p(k-1) \frac{C_{m-1,k-1}}{N_{m-1}} \notag \\
& \qquad +\ (1-p)(k+1)\frac{C_{m-1,k+1}}{N_{m-1}}. \notag
\end{align}

We now quote an essential lemma due to Backhausz and M\'ori \cite{BackhauszMoriLemma}. The proof uses martingale techniques. The following version is a slightly less general version adapted to our purposes. There is a corresponding result for the limit superior which we shall not use, and therefore do not quote here.
\begin{lemma}[Backhausz and M\'ori \cite{BackhauszMoriLemma}]
 Let $(\mathcal{F}_m)_{m=0}^{\infty}$ be a filtration. Let $(\xi_m)_{m=0}^{\infty}$ be a non--negative process adapted to $(\mathcal{F}_m)_{m=0}^{\infty}$, and let $(u_m)_{m=1}^{\infty},(v_m)_{m=1}^{\infty}$ be non--negative predictable processes such that $u_m<m$ for all $m\geq 1$ and $\lim_{m\to \infty} u_m = u>0$ exists almost surely. Let $w$ be a positive constant. Suppose that there exists $\delta >0$ such that $\mathbb{E}[(\xi_m-\xi_{m-1})^2 | \mathcal{F}_m]=O(m^{1-\delta})$.

If \begin{align}
    \liminf_{m\to \infty}\frac{v_m}{w}\geq v
   \end{align} for some constant $v\geq 0$ and 
\begin{align}
 \mathbb{E}[\xi_m | \mathcal{F}_{m-1}]\geq \left(1-\frac{u_m}{m} \right)\xi_{m-1}+v_m,
\end{align}
then
\begin{align}
 \liminf_{m\to \infty}\frac{\xi_m}{mw}\geq \frac{v}{u+1} \quad \text{ a.s.}
\end{align}
\label{lem:backhauszmori}
\end{lemma}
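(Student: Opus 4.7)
The plan is to multiply the recursive inequality by a predictable compensator so that it becomes a submartingale, and then control its martingale part via the second-moment hypothesis. Set $a_m:=\prod_{j=1}^{m}(1-u_j/j)^{-1}$, which is well defined since $u_m<m$ and predictable because each $u_j$ is. A weighted Ces\`aro argument using $u_j\to u>0$ a.s.\ together with $-\log(1-u_j/j)=u_j/j+O(1/j^2)$ yields $\log a_m=u\log m+o(\log m)$ a.s., so in particular $a_m\to\infty$ almost surely. Multiplying the hypothesis by $a_m$ and using the identity $a_m(1-u_m/m)=a_{m-1}$ gives $\E[a_m\xi_m\mid\mathcal{F}_{m-1}]\ge a_{m-1}\xi_{m-1}+a_mv_m$, hence
\begin{align*}
Z_m\ :=\ a_m\xi_m-\sum_{j=1}^{m}a_jv_j
\end{align*}
is a submartingale. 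Doob-decompose $Z_m=Z_0+N_m+B_m$ with $N$ a martingale and $B$ predictable non-decreasing.

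Next I would control $N_m$ in $L^2$. Since $\xi_{m-1}$ and $v_m$ are $\mathcal{F}_{m-1}$-measurable,
\begin{align*}
\E[(N_m-N_{m-1})^2\mid\mathcal{F}_{m-1}]\ =\ a_m^2\,\mathrm{Var}(\xi_m\mid\mathcal{F}_{m-1})\ \le\ a_m^2\,\E[(\xi_m-\xi_{m-1})^2\mid\mathcal{F}_{m-1}]\ =\ a_m^2\cdot O(m^{1-\delta}).
\end{align*}
Take $b_m:=a_m m$, which is predictable and a.s.\ increasing to $\infty$. The crucial cancellation $a_m^2 m^{1-\delta}/(a_m m)^2=m^{-1-\delta}$ gives
\begin{align*}
\sum_{m=1}^{\infty}\frac{\E[(N_m-N_{m-1})^2\mid\mathcal{F}_{m-1}]}{b_m^2}\ <\ \infty\quad\text{a.s.},
\end{align*}
so the $L^2$-martingale $\sum_{j\le m}(N_j-N_{j-1})/b_j$ converges almost surely. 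Kronecker's lemma, applied pathwise, then yields $N_m/b_m\to 0$ a.s., and since $B_m\ge 0$ one concludes $\liminf_m Z_m/(a_m m)\ge 0$ almost surely.

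To finish I would translate back. From $a_m\xi_m\ge N_m+Z_0+\sum_{j\le m}a_jv_j$,
\begin{align*}
\frac{\xi_m}{mw}\ \ge\ \frac{N_m+Z_0}{a_m m w}+\frac{1}{a_m m w}\sum_{j=1}^{m}a_jv_j,
\end{align*}
where the first term vanishes by the previous step. Fix $\epsilon>0$ and choose $J$ such that $v_j/w\ge v-\epsilon$ for $j\ge J$; then Stolz-Ces\`aro applied to $\sum_{j\le m}a_j/(a_m m)$, combined with the identity $a_m m-a_{m-1}(m-1)=a_m\bigl(1+(m-1)u_m/m\bigr)$ and $u_m\to u$, yields
\begin{align*}
\lim_{m\to\infty}\frac{1}{a_m m}\sum_{j=1}^{m}a_j\ =\ \frac{1}{u+1}\quad\text{a.s.}
\end{align*}
Hence $\liminf_m\xi_m/(mw)\ge(v-\epsilon)/(u+1)$, and letting $\epsilon\downarrow 0$ gives the claim. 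The only delicate point I anticipate is that $a_m$ is genuinely random via the predictable $u_m$, but because the martingale strong law through Kronecker and the Stolz-Ces\`aro asymptotic both proceed pathwise, everything can be carried out on the full-measure event $\{u_m\to u\}$ without further difficulty.
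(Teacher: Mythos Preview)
The paper does not prove this lemma at all: it is quoted from Backhausz and M\'ori with only the remark that ``the proof uses martingale techniques,'' so there is no in-paper argument to compare against. Your proof is the standard route for results of this type---compensate by the predictable product $a_m=\prod_{j\le m}(1-u_j/j)^{-1}$, Doob-decompose the resulting submartingale, kill the martingale part via the $L^2$ strong law and Kronecker, and finish with Stolz--Ces\`aro---and it is correct as written. One minor point: the hypothesis in the lemma conditions on $\mathcal{F}_m$ rather than $\mathcal{F}_{m-1}$, which, since $\xi_m$ is $\mathcal{F}_m$-measurable, is really the pathwise bound $(\xi_m-\xi_{m-1})^2=O(m^{1-\delta})$; this is stronger than the conditional-variance bound you invoke, so your argument goes through a fortiori.
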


In order to prove Theorem \ref{thm:as:degree0} we will prove the following theorem.

\begin{theorem}
  Let $C_{m,k}$ denote the number of $k$--cliques at time $m$. Denote by $N_m$ the total number of vertices in the graph at time $m$. Then for all $k\geq 1$ we have that
 \begin{align}
 \liminf_{m \to \infty}\frac{C_{m,k}}{N_m} \geq c_k
 \end{align}
 almost surely, where $(c_k)_{k=1}^{\infty}$ is a positive sequence satisfying
 \begin{align}
 c_1&=\frac{(1-p)(1+2c_2)}{1+p}, \label{def:seq1}\\ 
 c_k&=\frac{p(k-1)c_{k-1}+(1-p)(k+1)c_{k+1}}{k+p}, \quad k\geq 2. \label{def:seq2}
 \end{align}
 \label{thm:as:cliques}
\end{theorem}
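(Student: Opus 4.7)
The plan is to apply Lemma~\ref{lem:backhauszmori} to $\xi_m=C_{m,k}$ for each fixed $k\geq 1$, producing a liminf inequality that relates $a_k:=\liminf_{m\to\infty}C_{m,k}/N_m$ to $a_{k-1}$ and $a_{k+1}$, and then to recover the sequence $(c_k)$ as the limit of a monotone iteration that bypasses the two-sided nature of the recursion \eqref{def:seq1}--\eqref{def:seq2}.

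As preparation, $N_m=1+\sum_{i=1}^{m}B_i$ for i.i.d.\ Bernoulli$(p)$ random variables $B_i$, so $N_m/m\to p$ almost surely by the strong law of large numbers. Fix $k\geq 2$ and rewrite \eqref{exp:k} as $\mathbb{E}[C_{m,k}\mid\mathcal{F}_{m-1}]=(1-u_m/m)C_{m-1,k}+v_m$ with $u_m=mk/N_{m-1}$ (trimmed for the finitely many $m$ with $N_{m-1}\leq k$ so that $u_m<m$) and $v_m=[p(k-1)C_{m-1,k-1}+(1-p)(k+1)C_{m-1,k+1}]/N_{m-1}$. Then $u_m\to k/p$ a.s., and a single step alters $C_{m,k}$ by at most a deterministic constant, so $\mathbb{E}[(C_{m,k}-C_{m-1,k})^2\mid\mathcal{F}_{m-1}]=O(1)$, verifying the variance hypothesis. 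For any lower bounds $b_j\leq a_j$, superadditivity of liminf gives $\liminf v_m\geq p(k-1)b_{k-1}+(1-p)(k+1)b_{k+1}$, so with $w=p$ Lemma~\ref{lem:backhauszmori} yields
\begin{equation*}
a_k\;\geq\;\frac{p(k-1)b_{k-1}+(1-p)(k+1)b_{k+1}}{k+p},
\end{equation*}
after noting $N_m/(mp)\to 1$ so $\liminf C_{m,k}/(mp)=a_k$. An identical argument based on \eqref{exp:1} produces $a_1\geq(1-p)(1+2b_2)/(1+p)$.

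The main obstacle is that this inequality is not one-sided in $k$, so no forward induction on $k$ closes the argument. I would circumvent this by iteration. Let $T$ denote the componentwise monotone operator on non-negative sequences defined by the right-hand sides of \eqref{def:seq1}--\eqref{def:seq2}, set $c^{(0)}\equiv 0$ and $c^{(n+1)}:=T(c^{(n)})$. Induction on $n$, applying the displayed inequality with $b=c^{(n)}$ at the inductive step, gives $a\geq c^{(n)}$ componentwise for every $n$. Since $T$ is monotone and $c^{(1)}\geq c^{(0)}$, the sequence $(c^{(n)}_k)_n$ is non-decreasing in $n$; it is bounded above by $a_k\leq 1/k$ (from $\sum_j jC_{m,j}=N_m$), hence converges to some limit $c_k$. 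Componentwise continuity of $T$ shows $c=T(c)$, i.e.\ $c$ satisfies \eqref{def:seq1}--\eqref{def:seq2}. Finally $c^{(1)}_1=(1-p)/(1+p)>0$ and positivity propagates one index per iteration, so $c_k\geq c^{(k)}_k>0$ for every $k\geq 1$. This yields $\liminf_{m\to\infty}C_{m,k}/N_m\geq c_k$ a.s.\ with a positive sequence $(c_k)$ satisfying the recursion, establishing the theorem.
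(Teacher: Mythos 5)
Your proposal is correct and follows essentially the same route as the paper: the same application of Lemma~\ref{lem:backhauszmori} with $\xi_m=C_{m,k}$, $u_m=km/N_{m-1}$, $w=p$ and the conditional expectations \eqref{exp:1}, \eqref{exp:k}, combined with the same monotone iteration started from the zero sequence whose increasing, bounded iterates converge to a fixed point of the recursion. Your explicit treatment of the bound $a_k\leq 1/k$, the trimming of $u_m$, and the propagation of positivity ($c_k\geq c^{(k)}_k>0$) are minor refinements of details the paper glosses over, not a different argument.
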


We outline the idea behind the proof of Theorem \ref{thm:as:cliques}. 
\begin{enumerate}
 \item Let $A_k=\liminf_{m \to \infty}\frac{C_{m,k}}{N_m}$ for each $k\geq 1$. For each $k\geq 1$, we find a sequence $(a_k^{(j)})_{j=0}^{\infty}$ such that $a_k^{(j)}\leq A_k$ for all $j\geq 0$. This sequence is defined recursively and the inequalities are proved by induction.
\item For each $k\geq 1$, we prove that the sequence $(a_k^{(j)})_{j=0}^{\infty}$ is monotonically increasing (in $j$). Since it lies in the bounded set $[0,1]$ it must be convergent to some $a_k$. By construction we have that $a_k\leq A_k$ for all $k\geq 1$.
\end{enumerate}
The proof is similar to the first half of the proof of Proposition 5 in \cite{BackhauszMori} by Backhausz and M\'ori. They however also bound the limit superior from above and show that the limit superior and limit inferior are equal. This would work in this case as well (at least after one passes from the cliques to the individual vertices as we do in the next section), but we prefer another method.

\begin{proof}[Proof of Theorem \ref{thm:as:cliques}.]
\textbf{Step 1.} Define $A_k=\liminf_{m \to \infty}\frac{C_{m,k}}{N_m}$. Since the number of vertices increases by $1$ in a duplication step and is left unaffected in a deletion step, the strong law of large numbers implies that 
\begin{align}
 \lim_{m\to \infty}\frac{N_m}{pm}=1,
\end{align}
which means that
\begin{align} A_k=\liminf_{m\to \infty}\frac{C_{m,k}}{pm}.\end{align}
This representation will allow us to invoke Lemma \ref{lem:backhauszmori}, so we will use this from now on.

For each $k\geq 1$ we now construct a sequence $(a_k^{(j)})_{j=0}^{\infty}$ and prove by induction that it satisfies $a_k^{(j)}\leq A_k$ for all $j\geq 0$. Let $a_k^{(0)}=0$ for all $k\geq 1$. Define recursively the sequence $(a_k^{(j)})_{j=0}^{\infty}$ by
\begin{align}
 a_1^{(j+1)}=\frac{(1-p)(1+2a_2^{(j)})}{1+p} \label{k=1}
\end{align}
and
\begin{align}
 a_k^{(j+1)}=\frac{p(k-1)a_{k-1}^{(j)}+(1-p)(k+1)a_{k+1}^{(j)}}{k+p}, \qquad k\geq 2. \label{k=k}
\end{align}

The induction statement is true for $j=0$ by definition. Suppose that there exists some $j$ such that $a_k^{(j)}\leq A_k$ for all $k\geq 1$. For $k=1$ define the following variables, where we use the notation in Lemma \ref{lem:backhauszmori}.
\begin{align}
\begin{cases}
 \xi_m=C_{m,1}, \\ 
w=p, \\
u_m=\frac{m}{N_{m-1}}, \\ 
v_m=(1-p)+2(1-p) \frac{C_{m-1,2}}{N_{m-1}}.
\end{cases}
\end{align}
Note that $u_m$ and $v_m$ both are positive predictable sequences, that $\xi_m$ is a non--negative adapted sequence and that $u_m\to \frac{1}{p}=:u$ a.s. By the induction hypothesis $\liminf_{m \to \infty}\frac{C_{m-1,2}}{N_m-1}\geq a_2^{(j)}$. Choosing $v=\frac{(1-p)(1+2a_2^{(j)})}{p}$ we ensure that $\liminf_{m \to \infty}\frac{v_m}{w}\geq v$. The technical condition $\mathbb{E}[(\xi_m-\xi_{m-1})^2|\mathcal{F}_{m-1}]=O(m^{1-\delta})$ is satisfied since the maximum change in the number of $1$--cliques is $2$.
 By Lemma \ref{lem:backhauszmori} and (\ref{exp:1}) we have that
\begin{align}
\begin{split}
 A_1
=\liminf_{m \to \infty}\frac{C_{m,1}}{N_m} 
=\liminf_{m\to \infty}\frac{C_{m,1}}{pm}  
&\geq \frac{v}{u+1} \\
&=\frac{(1-p)(1+2a_2^{(j)})}{p(\frac{1}{p}+1)}\\
&=\frac{(1-p)(1+2a_2^{(j)})}{1+p} \\
&=a_1^{(j+1)}.
\end{split}
\end{align}

For $k>1$ we define the following:
\begin{align}
\begin{cases}
 \xi_m=C_{m,k}, \\
w=p, \\
u_m=\frac{km}{N_{m-1}}, \\ 
v_m=p(k-1) \frac{C_{m-1,k-1}}{N_{m-1}}+(1-p)(k+1) \frac{C_{m-1,k+1}}{N_{m-1}}.
\end{cases}
\end{align}

Note that $u_m\to \frac{k}{p}$ almost surely and that 
\begin{align}
 \liminf_{m \to \infty} \frac{v_m}{p}\geq \frac{p(k-1) a_{k-1}^{(j)}+(1-p)(k+1) a_{k+1}^{(j)}}{p}=:v.
\end{align}
by the induction hypothesis. The technical condition $\mathbb{E}[(\xi_m-\xi_{m-1})^2|\mathcal{F}_{m-1}]=O(m^{1-\delta})$ is satisfied. By Lemma \ref{lem:backhauszmori} and (\ref{exp:k}) we have that
\begin{align}
\begin{split}
 A_k
=\liminf_{m \to \infty}\frac{C_{m,k}}{N_m} 
=\liminf_{m\to \infty}\frac{C_{m,k}}{pm}  
&\geq \frac{v}{u+1} \\
&=\frac{p(k-1) a_{k-1}^{(j)}+(1-p)(k+1) a_{k+1}^{(j)}}{p(k/p+1)} \\
&=\frac{p(k-1) a_{k-1}^{(j)}+(1-p)(k+1) a_{k+1}^{(j)}}{k+p} \\
&=a_k^{(j)}.
\end{split}
\end{align}

This completes the proof that $a_k^{(j)}<A_k$ for all $j\geq 0$ and all $k\geq 1$.

\textbf{Step 2.} We now prove that the sequence $(a_k^{(j)})_{j=0}^{\infty}$ is increasing for any fixed $k$. We prove this by induction. Since $a_k^{(0)}=0$ for all $k$, it is clear that $a_k^{(1)}\geq a_k^{(0)}$. Suppose that the induction statement is true for some $j$.
 For $k=1$ we have that
\begin{align}
 a_1^{(j+1)}=\frac{(1-p)(1+2a_2^{(j)})}{1+p} \geq \frac{(1-p)(1+2a_2^{(j-1)})}{1+p}=a_1^{(j)}.
\end{align}
For $k>1$ we have that
\begin{align}
 a_k^{(j+1)}=\frac{p(k-1) a_{k-1}^{(j)}+(1-p)(k+1) a_{k+1}^{(j)}}{k+p}\geq \frac{p(k-1) a_{k-1}^{(j-1)}+(1-p)(k+1) a_{k+1}^{(j-1)}}{k+p}=a_k^{(j)}.
\end{align}
Hence the sequence $(a_k^{(j)})_{j=0}^{\infty}$ is an increasing sequence. Since $0\leq a_k^{(j)}\leq A_k\leq 1$ we have that the sequence is bounded above by $1$, and so it must be convergent and have a limit $a_k$. The limit sequence $(a_k)_{k=1}^{\infty}$ satisfies the recurrence
\begin{align}
 a_k=&\frac{p(k-1)a_{k-1}+(1-p)(k+1)a_{k+1}}{k+p} \qquad k\geq 2, \label{eq:reccurencek}
\end{align}
\begin{align}
a_1=&\frac{(1-p)(1+2a_2)}{1+p} \label{eq:recurrence1}
\end{align}
which is seen by taking limits in (\ref{k=1}) and (\ref{k=k}).
 \end{proof}

Let us now instead consider the degree distribution and use Theorem \ref{thm:as:cliques} to prove Theorem \ref{thm:as:degree0}, which we restate here for convenience. The first part of the proof follows straightforwardly from Theorem \ref{thm:as:cliques}.

 \begin{theorem}
  Let $D_{m,k}$ denote the number of vertices of degree $k-1$ (that is, the number of vertices in $k$--cliques) at time $m$. Denote by $N_m$ the total number of vertices at time $m$. Then for each $k\geq 1$ we have that
 \begin{align}
 \liminf_{m\to \infty}\frac{D_{m,k}}{N_m} \geq d_k
 \end{align}
 almost surely, where $(d_k)_{k=0}^{\infty}$ is the unique positive bounded sequence satisfying
 \begin{align}
 d_0&=\frac{1-p}{p}, \label{def:degreeseq0} \\
 d_k&=\frac{pkd_{k-1}+(1-p)kd_{k+1}}{k+p}, \quad k\geq 1. \label{def:degreeseq2}
 \end{align}
 \label{thm:as:degree} 
 \end{theorem}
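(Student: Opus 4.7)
The plan is to obtain Theorem~\ref{thm:as:degree} as an essentially algebraic consequence of Theorem~\ref{thm:as:cliques}. The key observation is that every vertex in a $k$--clique has degree exactly $k-1$, so $D_{m,k} = k\, C_{m,k}$ deterministically for all $m$ and all $k\geq 1$. I would therefore \emph{define} $d_k := k c_k$ for $k\geq 1$ and $d_0 := (1-p)/p$, where $(c_k)$ is the sequence supplied by Theorem~\ref{thm:as:cliques}. The liminf inequality is then immediate:
\begin{align*}
\liminf_{m\to\infty}\frac{D_{m,k}}{N_m} \;=\; k\,\liminf_{m\to\infty}\frac{C_{m,k}}{N_m} \;\geq\; k c_k \;=\; d_k.
\end{align*}
Boundedness of $(d_k)$ follows from $k C_{m,k}\leq N_m$, which gives $d_k\leq 1$, and positivity for $k\geq 1$ is inherited from positivity of $c_k$.

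Next I would verify that this $(d_k)$ satisfies \eqref{def:degreeseq0}--\eqref{def:degreeseq2}. Multiplying \eqref{def:seq2} by $k$ and using $(k-1)c_{k-1}=d_{k-1}$ and $(k+1)c_{k+1}=d_{k+1}$ yields \eqref{def:degreeseq2} directly for $k\geq 2$. The case $k=1$ is where the value of $d_0$ enters: rewriting \eqref{def:seq1} as
\begin{align*}
(1+p)c_1 \;=\; (1-p) + 2(1-p)c_2 \;=\; p\cdot\tfrac{1-p}{p} + (1-p)\,d_2
\end{align*}
shows that \eqref{def:degreeseq2} at $k=1$ is equivalent to \eqref{def:seq1} \emph{precisely} when $d_0 = (1-p)/p$, so the choice of $d_0$ is forced by consistency.

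For uniqueness, suppose $(d_k)$ and $(\tilde d_k)$ are two positive bounded sequences satisfying \eqref{def:degreeseq0}--\eqref{def:degreeseq2}. Their difference $e_k = d_k - \tilde d_k$ solves the homogeneous three-term recurrence
\begin{align*}
(1-p)k\,e_{k+1} \;=\; (k+p)\,e_k - pk\,e_{k-1}, \qquad k\geq 1,
\end{align*}
with $e_0=0$. After dividing by $k$ the limiting characteristic equation is $(1-p)\lambda^2-\lambda+p=0$, with roots $1$ and $p/(1-p)$; a Birkhoff/WKB refinement of the leading behaviour shows that, away from $p=\tfrac12$, one fundamental solution is bounded (of order $(p/(1-p))^k$ in the subcritical case and $k^{-p/(2p-1)}$ in the supercritical case) while the other is unbounded (growing polynomially like $k^{p/(1-2p)}$, respectively exponentially like $(p/(1-p))^k$). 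Hence the bounded solutions of the homogeneous recurrence form a one--dimensional subspace; since our constructed $d$ is itself a bounded solution with $d_0\neq 0$, every bounded $e$ must be a scalar multiple of $d$, and the requirement $e_0=0$ forces the scalar to vanish, so $e\equiv 0$.

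The main obstacle is this uniqueness step: making the minimal/non--minimal dichotomy rigorous for the variable--coefficient recurrence (and handling the critical case $p=\tfrac12$, where the two characteristic roots coincide and the analysis is more delicate) carries most of the technical weight. A cleaner alternative, if preferred, is to postpone uniqueness to Section~4, where the explicit hypergeometric representation of $d_k$ is derived and any competing positive bounded solution can be ruled out by direct comparison with that formula.
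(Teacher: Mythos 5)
Your first half --- the identity $D_{m,k}=kC_{m,k}$, the definition $d_k:=kc_k$ with $d_0:=(1-p)/p$, the transfer of the liminf bound, the verification that \eqref{def:seq1}--\eqref{def:seq2} become \eqref{def:degreeseq0}--\eqref{def:degreeseq2} (with the $k=1$ case forcing the value of $d_0$), and the bound $d_k\le 1$ from $kC_{m,k}\le N_m$ --- is exactly the paper's argument, and if anything you state the liminf step more carefully than the paper does.

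The gap is in the uniqueness step, which is part of the statement being proved. You reduce to the homogeneous recurrence $(1-p)k\,e_{k+1}=(k+p)e_k-pk\,e_{k-1}$ with $e_0=0$ and then appeal to a Poincar\'e--Perron/Birkhoff dichotomy (one bounded fundamental solution, one unbounded) which you do not establish; you yourself flag that making this rigorous ``carries most of the technical weight'' and that the critical case $p=\tfrac12$, where the characteristic roots coincide and the fundamental solutions behave like $e^{\pm 2\sqrt{k}}$ up to power corrections, is not handled at all. The fallback you offer (comparison with the Section~4 hypergeometric formula) does not close this either: exhibiting one bounded solution never rules out a second one without precisely the second-solution analysis you are trying to avoid. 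The paper closes uniqueness with a short elementary argument that works uniformly for all $0<p<1$: set $\theta_k=d_k-\hat d_k$, so $\theta_0=0$, and assume WLOG $\theta_1>0$; an induction using $\theta_k\ge\theta_{k-1}$ gives
\begin{align}
\theta_{k+1}=\frac{(k+p)\theta_k-pk\theta_{k-1}}{(1-p)k}\ \ge\ \Bigl(1+\frac{p}{(1-p)k}\Bigr)\theta_k ,
\end{align}
hence $\theta_k\ge\theta_1\prod_{i=1}^{k-1}\bigl(1+\frac{p}{(1-p)i}\bigr)\to\infty$ because the corresponding series diverges like the harmonic series, contradicting boundedness; thus $\theta_1=0$ and then $\theta_k\equiv 0$ by the recurrence. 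Replacing your asymptotic-dichotomy sketch with this induction (or carrying out the Birkhoff analysis in full, including $p=\tfrac12$) is what is needed to make the proposal complete.
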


\begin{proof}
There are $k$ vertices in each $k$--clique, so $D_{m,k}=kC_{m,k}$. By Theorem \ref{thm:as:cliques} we have that $D_{m,k}/N_m\to kc_k$ almost surely. Defining $d_0=\frac{1-p}{p}$ and putting $d_k=kc_k$ in (\ref{def:seq1}) and (\ref{def:seq2}), we retrieve (\ref{def:degreeseq2}) for $k\geq 1$.

Since $0\leq \frac{D_{m,k}}{N_m}\leq 1$ and the sequence $(d_k)_{k=1}^{\infty}$ is positive, we have that $(d_k)_{k=1}^{\infty}$ is a bounded sequence in $(0,1]$.

For uniqueness, suppose for contradiction that there are two distinct bounded sequences satisfying (\ref{def:degreeseq0}) and (\ref{def:degreeseq2}) above, $(d_k)_{k=0}^{\infty}$ and $(\hat{d_k})_{k=0}^{\infty}$ say. Without loss of generality suppose that $d_1>\hat{d_1}$. Define $\theta_k=d_k-\hat{d_k}$ for all $k\geq 0$. This is a bounded sequence satisfying the homogeneous system of equations
\begin{align}
\theta_0&=0, \label{def:hom0} \\
\theta_{k+1}&=\frac{(k+p)\theta_k-pk\theta_{k-1}}{(1-p)k}, \quad k\geq 1. \label{def:hom2}
\end{align}

We prove by induction that 
\begin{align}
 \theta_{k+1}\geq \left(1+\frac{p}{(1-p)k+\mathbbm{1}_{\{k=0\}}} \right)\theta_k
\end{align}
for all $k\geq 0$. Since $\theta_0=0$ and $\theta_1=d_1-\hat{d_1}>0$ this is certainly true for $k=0$. Suppose it holds true for $k-1$. This implies trivially that $\theta_{k}\geq \theta_{k-1}$. But then
\begin{align}
 \theta_{k+1}= \frac{(k+p)\theta_k-pk\theta_{k-1}}{(1-p)k}\geq \frac{k+p-kp}{(1-p)k}\theta_{k}=\frac{(1-p)k+p}{(1-p)k}\theta_k=\left(1+\frac{p}{(1-p)k} \right)\theta_k.
\end{align}
This proves the statement by induction. Inductively we obtain that 
\begin{align}
\theta_k\geq \prod_{i=1}^{k-1} \left(1+\frac{p}{(1-p)i} \right)\theta_1.
\end{align}
If $\theta_1>0$, then this product is strictly larger than the sum $\frac{p}{1-p}\theta_1\sum_{i=1}^{k}\frac{1}{k}$, which diverges as $k\to \infty$. This implies that $\theta_k\to \infty$, contradicting the boundedness of the sequence $(\theta_k)_{k=0}^{\infty}$. Hence $\theta_1=0$ and $d_1=\hat{d_1}$. But this implies that $d_k=\hat{d_k}$ for all $k\geq 1$. Hence there is a unique bounded sequence satisfying (\ref{def:degreeseq0}) and (\ref{def:degreeseq2}).
\end{proof}

We remark that although $d_0$ does not have any probabilistic interpretation, it will simplify the analysis later to have defined this. 

\section{Almost sure convergence of the degree distribution}
We now proceed to prove that we in fact have
\begin{align}
 \lim_{m\to \infty} \frac{D_{m,k}}{N_m}=d_k
\end{align}
almost surely for all $k\geq 1$. We will need the following auxiliary lemma.

\begin{lemma}\label{lem:lim}
Suppose we have a countable family $(Y_{m,k})_{k=1}^{\infty}$ of non--negative random variables indexed by discrete time $m$, with the property that $\sum_{k=1}^{\infty}Y_{m,k}=1$ for all $m$. Let $(b_k)_{k=1}^{\infty}$ be a non--negative sequence with $\sum_{k=1}^{\infty}b_k=1$ and such that
\begin{align}
  \liminf_{m\to \infty}Y_{m,k}\geq b_k
\end{align}
almost surely. Then \begin{align}
      \lim_{t\to \infty} Y_{m,k}=b_k
     \end{align}
exists almost surely.
\end{lemma}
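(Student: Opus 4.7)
The plan is to deduce this from a Fatou-type argument, exploiting the fact that both $(Y_{m,k})_k$ and $(b_k)$ sum to $1$, so the assumed one-sided bound is forced to saturate. Since the countable collection of almost sure events $\{\liminf_m Y_{m,k} \geq b_k\}$ for $k \geq 1$ has an intersection of full measure, I work pathwise on that event and argue deterministically.

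The first step is to observe, via Fatou's lemma applied to counting measure on $\N$,
\begin{align}
\sum_{k=1}^{\infty} b_k \;\leq\; \sum_{k=1}^{\infty} \liminf_{m\to\infty} Y_{m,k} \;\leq\; \liminf_{m\to\infty} \sum_{k=1}^{\infty} Y_{m,k} \;=\; 1 \;=\; \sum_{k=1}^{\infty} b_k,
\end{align}
so all inequalities are equalities.

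The second step is to upgrade the $\liminf$ hypothesis to a matching $\limsup$ bound. Fix $k_0 \geq 1$ and write $Y_{m,k_0} = 1 - \sum_{k \neq k_0} Y_{m,k}$. Applying Fatou's lemma once more to the non-negative tail sum gives
\begin{align}
\limsup_{m\to\infty} Y_{m,k_0} &= 1 - \liminf_{m\to\infty} \sum_{k \neq k_0} Y_{m,k} \\
&\leq 1 - \sum_{k \neq k_0} \liminf_{m\to\infty} Y_{m,k} \;\leq\; 1 - \sum_{k \neq k_0} b_k \;=\; b_{k_0}.
\end{align}
Combined with the hypothesis $\liminf_m Y_{m,k_0} \geq b_{k_0}$ this pinches $\lim_m Y_{m,k_0} = b_{k_0}$, which is the desired conclusion.

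There is no real obstacle: since everything is non-negative and indexed by the discrete set $\N$, both uses of Fatou are the standard version for series and no integrability or tightness argument is needed. The only thing to be careful about is that the almost sure event on which the bounds hold must be taken once and for all as the countable intersection over $k \geq 1$ of the hypothesis events, so that the deterministic argument above applies simultaneously to every $k_0$.
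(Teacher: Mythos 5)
Your argument is correct and is essentially the paper's own proof: both pin down $\limsup_m Y_{m,k_0}\leq b_{k_0}$ by writing $Y_{m,k_0}=1-\sum_{k\neq k_0}Y_{m,k}$ and applying Fatou's lemma for series to the tail sum, using $\sum_k b_k=1$. Your additional first step and the explicit remark about intersecting the countably many full-measure events are harmless refinements of the same approach.
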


\begin{proof}
The idea is to prove that $\limsup_{m\to \infty}Y_{m,k}\leq b_k$ for all $k\geq 1$. The following calculation is routine and only uses Fatou's lemma and well--known facts about the limit inferior and limit superior. For $k=1$ we have that
\begin{align}
\begin{split}
 \limsup_{m\to \infty}Y_{m,1} 
=\limsup_{m\to \infty}\left(1-\sum_{k=2}^{\infty}Y_{m,k} \right) 
&= 1+\limsup_{m\to \infty}\left(-\sum_{k=2}^{\infty}Y_{m,k}\right) \\
&= 1-\liminf_{m\to \infty}\sum_{k=2}^{\infty}Y_{m,k} \\
&\leq 1-\sum_{k=2}^{\infty}\liminf_{m\to \infty}Y_{m,k} \\
&\leq 1-\sum_{k=2}^{\infty}b_k \\
&=1-(1-b_1) \\
&=b_1.
\end{split}
\end{align}
A similar proof works for any $k\geq 1$, so we have that 
\begin{align}
\limsup_{m\to \infty}Y_{m,k}\leq b_k
\end{align}
almost surely for all $k\geq 1$. This implies that \begin{align}\lim_{m\to \infty} Y_{m,k}=b_k\end{align} almost surely.
\end{proof}

Aided by this lemma we can now show that $\lim_{m\to \infty}\frac{D_{m,k}}{N_m}=d_k$ almost surely. We note that the proof of this theorem relies on Lemma \ref{lem:probdist} which we prove later.
\begin{theorem}\label{thm:as}
For each $k\geq 1$ we have that \begin{align}
 \lim_{m\to \infty} \frac{D_{m,k}}{N_m}=d_k
\end{align}
almost surely, where $d_k$ is the unique bounded positive sequence satisfying
\begin{align}
d_0&=\frac{1-p}{p}, \label{def:degseq0}  \\
d_k&=\frac{pkd_{k-1}+(1-p)kd_{k+1}}{k+p}, \quad k\geq 1. \label{def:degseqk}
\end{align}
almost surely.
\end{theorem}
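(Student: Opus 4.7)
The plan is to deduce Theorem \ref{thm:as} as an immediate application of Lemma \ref{lem:lim} with $Y_{m,k} := D_{m,k}/N_m$ and $b_k := d_k$ (for $k\ge 1$), where $(d_k)_{k=1}^{\infty}$ is the bounded positive sequence produced by Theorem \ref{thm:as:degree}. This reduces the proof to checking the three hypotheses of Lemma \ref{lem:lim}.

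First I would verify that $\sum_{k=1}^{\infty} Y_{m,k} = 1$ for every $m$. Since every vertex of $G_m$ has some (finite) degree and $D_{m,k}$ counts vertices of degree $k-1$, the random variables $\{D_{m,k}\}_{k\ge 1}$ partition the vertex set of $G_m$, so $\sum_{k=1}^{\infty} D_{m,k} = N_m$, giving the required identity. Non-negativity is obvious. Second, the liminf inequality
\begin{align}
\liminf_{m\to\infty} Y_{m,k} \geq d_k \quad \text{a.s.}
\end{align}
is precisely the content of Theorem \ref{thm:as:degree}, already established in the previous section. Third, I need $\sum_{k=1}^{\infty} d_k = 1$, which is the statement of Lemma \ref{lem:probdist} (proved later in Section 4 via the explicit hypergeometric representation of $d_k$).

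Once these three ingredients are assembled, Lemma \ref{lem:lim} yields, for each fixed $k\geq 1$, that $\lim_{m\to\infty} D_{m,k}/N_m = d_k$ almost surely. A small technical point is that the exceptional null set produced by Lemma \ref{lem:lim} may depend on $k$; however, the countable intersection over $k\in\mathbb{N}$ of events of full probability still has full probability, so the almost-sure convergence holds jointly for all $k\geq 1$.

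The substantive work hidden behind this short argument lies entirely in Lemma \ref{lem:probdist}: one cannot conclude $\sum d_k = 1$ from the recurrence (\ref{def:degseqk}) alone, since the recurrence is preserved under scaling and a priori only guarantees that $\sum d_k$ is some finite positive number. The explicit identification $\sum d_k = 1$ therefore requires the solution of the recurrence carried out in Section 4 and will be the genuine obstacle. Given that, the transition from the liminf bound of Theorem \ref{thm:as:degree} to the full almost-sure limit is a purely formal application of Fatou's lemma for series, packaged as Lemma \ref{lem:lim}.
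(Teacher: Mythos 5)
Your proposal is correct and follows exactly the paper's own route: apply Lemma \ref{lem:lim} with $Y_{m,k}=D_{m,k}/N_m$, using Theorem \ref{thm:as:degree} for the liminf bound and Lemma \ref{lem:probdist} for $\sum_{k\ge 1} d_k=1$. Your added remarks (the null set depending on $k$ being handled by countable intersection, and the observation that $\sum d_k=1$ cannot follow from the recurrence alone) are accurate but do not change the argument.
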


\begin{proof}
By Theorem \ref{thm:as:degree} the sequence $(d_k)_{k=1}^{\infty}$ is unique, bounded and positive. Furthermore, $\liminf_{m\to \infty}\frac{D_{m,k}}{N_m}\geq d_k$ holds almost surely. Since $\sum_{k=1}^{\infty}D_{m,k}=N_m$ for all $m\geq 1$ it holds that $\sum_{k=1}^{\infty}\frac{D_{m,k}}{N_m}=1$. In Section $4$ we will determine the sequence $(d_k)_{k=1}^{\infty}$ explicitly and show that it has the property $\sum_{k=1}^{\infty}d_k=1$ for all $0<p<1$. Invoking Lemma \ref{lem:lim} with $Y_{m,k}=\frac{D_{m,k}}{N_m}$ we find that 
\begin{align}\lim_{m \to \infty} \frac{D_{m,k}}{N_m}=d_k\end{align}
exists almost surely.
\end{proof}


\section{Analysis of the probability distribution}
In this section we analyse the sequence $(d_k)_{k=1}^{\infty}$ and show that such a sequence exists. We find exact expressions in terms of the hypergeometric function and later derive asymptotics. For the subcritical and supercritical cases we use Laplace's solution method of recursions with polynomial coefficients. The critical case is covered in \cite{BackhauszMori}, so we mention it only briefly below.

\subsection{Exact expressions of the asymptotic degree distribution}
\subsubsection{Supercritical case}
Suppose that $\frac{1}{2}<p<1$. Using Laplace's method we can determine the exact solution to the sequence $(d_k)_{k=0}^{\infty}$ defined in (\ref{def:degreeseq0}) and (\ref{def:degreeseq2}). For details about Laplace's method we refer the reader to \cite{Jordan, Norlund}. It is used in a similar vein in among others \cite{CooperFriezeVera, DeijfenLindholm, PralatWang, Vallier}. 
\begin{theorem}
Let $\frac{1}{2}<p<1$. Then
\begin{align}
 d_k=\gamma\int_0^1\frac{t^k (1-t)^{\beta-1}}{(1-\gamma t)^{\beta+1}}dt
\end{align}
where $\beta=\frac{p}{2p-1}$ and $\gamma=\frac{1-p}{p}$.
\label{thm:exact}
\end{theorem}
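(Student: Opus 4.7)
The plan is to apply Laplace's method for linear difference equations with polynomial coefficients. First I rewrite the recurrence (\ref{def:degreeseq2}) in the form
\begin{equation*}
k\bigl[(1-p)d_{k+1} - d_k + p\,d_{k-1}\bigr] = p\,d_k, \qquad k\ge 1,
\end{equation*}
and look for a solution of the form $d_k = \int_0^1 t^k f(t)\,dt$ for some unknown integrable weight $f$ on $[0,1]$. Under this ansatz the left-hand side becomes $k\int_0^1 t^{k-1} h(t) f(t)\,dt$ with $h(t):=(1-p)t^2-t+p$, and integration by parts using $kt^{k-1}=(t^k)'$ transforms this into $\bigl[t^k h(t) f(t)\bigr]_0^1 - \int_0^1 t^k [h(t) f(t)]'\,dt$. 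The recurrence is then satisfied for all $k\ge 1$ provided $f$ solves the first-order linear ODE $[h(t) f(t)]' + p\,f(t)=0$ together with the boundary condition $\bigl[t^k h(t) f(t)\bigr]_0^1=0$.

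To solve the ODE, I factor $h(t) = (1-p)(t-1)(t-1/\gamma)$, whose roots involve precisely the parameter $\gamma=(1-p)/p$ appearing in the statement. Cancelling the common factor $(1-p)$ and applying partial fractions, one finds
\begin{equation*}
\frac{f'(t)}{f(t)} = \frac{\beta-1}{t-1} - \frac{\beta+1}{t-1/\gamma},
\end{equation*}
where the exponents $\beta-1=(1-p)/(2p-1)$ and $\beta+1=(3p-1)/(2p-1)$ emerge naturally from the partial fraction residues together with the identity $1-\gamma=(2p-1)/p=1/\beta$. Integrating and converting $(1/\gamma-t)^{\beta+1}$ to $\gamma^{-(\beta+1)}(1-\gamma t)^{\beta+1}$ gives, up to a multiplicative constant,
\begin{equation*}
f(t) = \frac{(1-t)^{\beta-1}}{(1-\gamma t)^{\beta+1}}.
\end{equation*}

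In the supercritical regime $p>\tfrac12$ one has $\beta-1>0$, so $f(1)=0$ kills the boundary term at $t=1$; the boundary at $t=0$ vanishes trivially since $t^k=0$ there for $k\ge 1$. Hence $d_k = K\int_0^1 t^k f(t)\,dt$ satisfies (\ref{def:degreeseq2}) for every constant $K$. To fix $K$ and match the initial condition $d_0=\gamma$ from (\ref{def:degreeseq0}), I evaluate the $k=0$ integral by the substitution $u=1-t$ and the classical hypergeometric identity ${}_2F_1(a,b;a;z)=(1-z)^{-b}$ (equivalently, by a direct computation using the Euler beta integral), which yields $\int_0^1 f(t)\,dt = 1/[\beta(1-\gamma)]=1$ since $1-\gamma=1/\beta$. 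Therefore the choice $K=\gamma$ produces the desired $d_0$. The resulting sequence is manifestly positive, bounded above by $\gamma$, and satisfies both (\ref{def:degreeseq0}) and (\ref{def:degreeseq2}), so by the uniqueness clause of Theorem~\ref{thm:as:degree0} it is the sequence from that theorem.

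The main obstacle is the normalisation: one must check that the $k=0$ integral evaluates to exactly $1$ (rather than some other function of $\beta$ and $\gamma$) so that the clean prefactor $\gamma$ in the statement appears correctly; the key algebraic input is the identity $1-\gamma=1/\beta$ that collapses $1/[\beta(1-\gamma)]$ to $1$. The remaining work---the ODE bookkeeping, the partial-fraction computation, and verification of the boundary conditions---is routine but must be carried out with care to pin down the exponents $\beta\pm1$ and the correct signs.
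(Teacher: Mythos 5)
Your proposal is correct and follows essentially the same route as the paper: the integral ansatz of Laplace's method, the resulting first-order ODE for the weight (your condition $[h f]'+pf=0$ is exactly the paper's $v'/v=\hat{\Psi}/(t\hat{\Phi})$ after writing $v(t)=tf(t)$), the choice of endpoints $t_0=0$, $t_1=1$ justified by $\beta>0$, $\gamma<1$, and normalisation via $d_0=\gamma$ using the same hypergeometric/binomial evaluation. Your explicit invocation of the uniqueness clause of Theorem~\ref{thm:as:degree0} to identify the constructed sequence with the limiting one is a small but welcome point of extra care that the paper leaves implicit.
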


\begin{proof}
First we put (\ref{def:degseqk}) on the form \begin{equation}k(1-p)d_{k+1}-(k+p)d_{k}+kpd_{k-1}=0, \qquad k\geq 1,\end{equation} and and identify constants $\Phi_0, \Phi_1, \Phi_2, \Psi_0, \Psi_1$ and $\Psi_2$ so that it can be written on the form \begin{equation} \left(\Phi_2(k+1)+\Psi_2 \right)d_{k+1}+\left(\Phi_1k+\Psi_1 \right)d_{k}+\left(\Phi_0(k-1)+\Psi_0 \right)d_{k-1}=0. \end{equation} Comparing coefficients we see that 
\begin{equation}
\begin{cases} 
	\Phi_2 = (1-p) \\ \Phi_1 = -1 \\ \Phi_0 = p 
\end{cases} \end{equation} and 
\begin{equation}\begin{cases} 
	\Psi_2 = -(1-p) \\ \Psi_1 = -p \\ \Psi_0 = p. 
\end{cases} \end{equation}

Now define $\hat{\Phi}(t)=\Phi_2t^2+\Phi_1t+\Phi_0$ and $\hat{\Psi}(t)=\Psi_2t^2+\Psi_1t+\Psi_0$, that is
\begin{equation} \begin{cases} 
	\hat{\Phi}(t)=(1-p)t^2-t+p=(t-1)((1-p)t-p), \\ 
	\hat{\Psi}(t)=-(1-p)t^2-pt+p. 
\end{cases}\end{equation}

If we put \begin{equation}d_k=\int_{t_0}^{t_1}t^{k-1}v(t)dt, \qquad k\geq 0,\end{equation} we need only determine $t_0<t_1$ and a function $v:[t_0,t_1]\to \mathbb{R}$ such that
\begin{equation}\frac{v'(t)}{v(t)}=\frac{\hat{\Psi}(t)}{t\hat{\Phi}(t)} =\frac{-(1-p)t^2-pt+p}{t(t-1)((1-p)t-p)} \label{eq:der}
\end{equation} 
and \begin{equation}\left[t^k v(t)\hat{\Phi}(t) \right]_{t_0}^{t_1}=0 \label{eq:int}\end{equation} in order to have a solution to the recursion. Define $\gamma=\frac{1-p}{p}$ so that (\ref{eq:der}) can be written on the form
\begin{align}
 \frac{v'(t)}{v(t)}=\frac{\gamma t^2+t-1}{t(t-1)(1-\gamma t)}. \label{eq:diff}
\end{align}

Integrating we find that
\begin{align}
 \log v(t)=\frac{\gamma \log(1-t)+(1-\gamma)\log(t)+(\gamma -2 )\log(1-\gamma t)}{1-\gamma}+\const.
\end{align}
Exponentiating and defining $\beta=\frac{p}{2p-1}$ we finally obtain
\begin{align}
\begin{split}
 v(t)
&=Rt(1-t)^{-\frac{\gamma}{\gamma-1}}(1-\gamma t)^{\frac{1}{\gamma-1}-1} \\
&=Rt(1-t)^{\beta-1}(1-\gamma t)^{-(\beta+1)}
\end{split}
\end{align}
where $R$ is some positive constant, to be determined. We note that 
\begin{align}t^kv(t)\hat{\Phi}(t)=\const \cdot t^{k+1}(1-t)^{\beta}(1-\gamma t)^{-\beta}.\label{eq:tvphi}\end{align} 

For $\frac{1}{2}<p<1$ we have that $\beta>0$ and $\gamma<1$, so the function $t^kv(t)\hat{\Phi}(t)$ is continuous on $[0,1]$ and zero at the endpoints. Choosing $t_0=0$ and $t_1=1$ we ensure that (\ref{eq:int}) is satisfied. Note also that the singularities of the right hand side in (\ref{eq:diff}) occur at $t=0, t=1$ and $t=\gamma^{-1}>1$.

%
%

We thus obtain \begin{align} d_k
=\int_{0}^{1}t^{k-1}v(t)dt 
&=R\int_{0}^{1}\frac{t^{k}(1-t)^{\beta-1}}{(1-\gamma t)^{\beta+1}}dt.\label{eq:exact}
\end{align}

The integral in (\ref{eq:exact}) is a hypergeometric integral \cite{SpecialFunctions}, and can be expressed in terms of the hypergeometric function $_2F_1$. To be precise, we have that
\begin{align}
 d_k=R\frac{\Gamma(k+1)\Gamma(\beta)}{\Gamma(\beta+k+1)}\ _2F_1(\beta+1 , k+1,\beta + k+1; \gamma).
\end{align}

We now determine the constant $R$. Recall the Pochhammer symbol $(x)_k=\prod_{i=0}^{k-1}(x+i)$. Using a standard series expansion of the hypergeometric function, c.f. \cite{SpecialFunctions}, we find for $k=0$ that 
\begin{align}
\begin{split}
 d_0
&=R\frac{\Gamma(1)\Gamma(\beta)}{\Gamma(\beta+1)}\ _2F_1(\beta+1 , 1,\beta + 1; \gamma) \\
&=R\beta^{-1} \sum_{n=0}^{\infty}\frac{(\beta+1)_n(1)_n}{(\beta+1)_nn!}\gamma^n  \\
&=R\beta^{-1}\sum_{n=0}^{\infty}\gamma^n \\
&=R\beta^{-1} (1-\gamma)^{-1}\\
&=R.
\end{split}
\end{align}
Since $d_0=\gamma$ we find that $R=\gamma$. An alternative approach to evaluating $R$ is to compute the integral for $d_0$ directly.

Having determined $R$ we know that
\begin{align}
 d_k=\gamma\int_0^1\frac{t^k (1-t)^{\beta-1}}{(1-\gamma t)^{\beta+1}}dt
\end{align}
for all $k\geq 0$.
 \end{proof}

\subsubsection{Subcritical case}
\begin{theorem}
Let $0<p<\frac{1}{2}$. Then
\begin{align}
 d_k=\gamma^{-k}\int_0^1 \frac{t^k(1-t)^{-1-\beta}}{(1-\gamma^{-1}t)^{1-\beta}}dt
\end{align}
where $\beta=\frac{p}{2p-1}$ and $\gamma=\frac{1-p}{p}$.
\label{thm:subexact}
\end{theorem}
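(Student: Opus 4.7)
The plan is to recycle the Laplace method developed in the proof of Theorem \ref{thm:exact}, modifying only the contour of integration to accommodate the subcritical parameter values $0<p<1/2$. The recurrence $k(1-p)d_{k+1}-(k+p)d_k+kp\,d_{k-1}=0$, the polynomials $\hat\Phi(t)=(1-p)(t-1)(t-\gamma^{-1})$ and $\hat\Psi(t)$, the separable ODE (\ref{eq:diff}), and the family of solutions
\begin{equation}
v(t)=R\,t(1-t)^{\beta-1}(1-\gamma t)^{-\beta-1}
\end{equation}
all carry over verbatim. What differs is the admissible interval $[t_0,t_1]$ for the ansatz $d_k=\int_{t_0}^{t_1}t^{k-1}v(t)\,dt$, since we still need $[t^k v(t)\hat\Phi(t)]_{t_0}^{t_1}=0$ with the boundary expression proportional to $t^{k+1}(1-t)^{\beta}(1-\gamma t)^{-\beta}$.

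For $0<p<1/2$ we have $\beta<0$ and $\gamma>1$, so $(1-t)^{\beta}$ is singular at $t=1$ (ruling out the previous choice $t_1=1$) while $(1-\gamma t)^{-\beta}$, with $-\beta>0$, vanishes at $t=\gamma^{-1}\in(0,1)$. I would therefore take $t_0=0$ and $t_1=\gamma^{-1}$. The boundary term then vanishes at both endpoints, and $t^{k-1}v(t)$ is integrable on $[0,\gamma^{-1}]$: near $0$ it is comparable to $t^k$, and near $\gamma^{-1}$ it is comparable to $(1-\gamma t)^{-\beta-1}$, which is integrable precisely because $\beta<0$ yields $-\beta-1>-1$.

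The change of variables $u=\gamma t$ in $d_k=R\int_0^{\gamma^{-1}}t^k(1-t)^{\beta-1}(1-\gamma t)^{-\beta-1}dt$ then produces
\begin{equation}
d_k=R\gamma^{-k-1}\int_0^1 u^k(1-u)^{-1-\beta}(1-\gamma^{-1}u)^{\beta-1}du,
\end{equation}
which matches the stated formula up to the prefactor $R\gamma^{-1}$. The constant $R$ is pinned down by evaluating $d_0$: the resulting Euler-type integral equals $\frac{\Gamma(-\beta)}{\Gamma(1-\beta)}\,{}_2F_1(1-\beta,1;1-\beta;\gamma^{-1})$, which collapses via the identity ${}_2F_1(a,b;a;z)=(1-z)^{-b}$ together with $\Gamma(1-\beta)=-\beta\,\Gamma(-\beta)$ to give $d_0=R$. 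Matching with $d_0=(1-p)/p=\gamma$ forces $R=\gamma$. To conclude, appeal to the uniqueness part of Theorem \ref{thm:as:degree}: the displayed expression is manifestly positive, and bounded uniformly in $k$ by dominated convergence applied to the integrand, which is uniformly controlled by the integrable function $(1-u)^{-1-\beta}(1-\gamma^{-1}u)^{\beta-1}$ on $[0,1]$.

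There is no genuine obstacle beyond the correct identification of the endpoints: the only real care needed is to verify that the singularity of $(1-t)^{\beta}$ at $t=1$ is avoided by the new interval $[0,\gamma^{-1}]$, that the boundary term truly vanishes at $t=\gamma^{-1}$, and that the evaluation of the Euler integral for $d_0$ is consistent with the boundary condition of the recurrence.
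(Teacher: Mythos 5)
Your proposal is correct and follows essentially the same route as the paper: reuse the Laplace-method setup from the supercritical proof, switch the integration endpoints to $t_0=0$, $t_1=\gamma^{-1}$ (justified by $\beta<0$, $\gamma>1$ so the boundary term $t^{k+1}(1-t)^{\beta}(1-\gamma t)^{-\beta}$ vanishes there), substitute $u=\gamma t$, and fix $R=\gamma$ from $d_0=\gamma$ via the hypergeometric/Euler integral. The only difference is cosmetic: you spell out the $_2F_1$ evaluation and the boundedness-plus-uniqueness step that the paper leaves implicit, and your computations check out.
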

\begin{proof}
The subcritical case is similar to the supercritical case, so we give less detail here. Indeed, we can follow the analysis for the supercritical case up until (\ref{eq:tvphi}). When $0<p<\frac{1}{2}$ we have that $\beta<0$ and $\gamma>1$, so we need to make the choices $t_0=0$ and $t_1=\gamma^{-1}$ instead. This implies that 
\begin{align}
 d_k=R\int_0^{\gamma^{-1}}\frac{s^k(1-s)^{\beta-1}}{(1-\gamma s)^{\beta +1}}ds=R\gamma^{-k-1}\int_0^1 \frac{t^k(1-t)^{-1-\beta}}{(1-\gamma^{-1}t)^{1-\beta}}dt,
\end{align}
where we used the change of variables $t=\gamma s$. Again using the fact that $d_0=\gamma$ and the interpretation of this integral as the hypergeometric function we find that $R=\gamma$. Hence
\begin{align}
 d_k=\gamma^{-k}\int_0^1 \frac{t^k(1-t)^{-1-\beta}}{(1-\gamma^{-1}t)^{1-\beta}}dt.
\end{align}
\end{proof}

\subsubsection{Critical case} In the critical case $p=\frac{1}{2}$ we need to analyse the sequence defined by
\begin{align}
 &d_0=1,\label{def:exseq1} \\
 &d_k=\frac{kd_{k-1}+kd_{k+1}}{2k+1}, \qquad k\geq 1. \label{def:exseqk}
\end{align} 

As noted in the introduction, Backhausz and M\'ori introduced in \cite{BackhauszMori} a dynamic random graph model very similar to ours, with the difference that every other step was a duplication step and every other step a deletion step. For $p=\frac{1}{2}$ this is in some sense true on average in our model. Therefore it is not surprising that the sequence given by (\ref{def:exseq1}) and (\ref{def:exseqk}) appears as the asymptotic degree distribution in \cite{BackhauszMori} as well. Backhausz and M\'ori found the exact solution
\begin{align}
 d_k= k\int_0^{\infty}\frac{t^{k-1}e^{-t}}{(1+t)^{k+1}}dt.\label{eq:critexact}
\end{align}
and the asymptotic result 
\begin{align}d_k\sim (e\pi)^{1/2}k^{1/4}e^{-2\sqrt{k}}.\label{eq:critasymptotics}
\end{align}

This expression decreases slower than any exponential function and decreases faster than any polynomial, which demonstrates the fact that the asymptotic degree distribution in the critical case $p=\frac{1}{2}$ lies between a power--law and exponential decay.

It is worth noting that the integral in (\ref{eq:critexact}) appears as the limiting case as $p\to \frac{1}{2}$ in Theorem \ref{thm:exact} and Theorem \ref{thm:subexact}, that is
\begin{align}
\lim_{p\downarrow \frac{1}{2}}\gamma \int_0^1 \frac{t^k(1-t)^{\beta-1}}{(1-\gamma t)^{\beta+1}}dt = k\int_0^{\infty}\frac{t^{k-1}e^{-t}}{(1+t)^{k+1}}dt = \lim_{p\uparrow \frac{1}{2} }\gamma^{-k}\int_0^1 \frac{t^k(1-t)^{-1-\beta}}{(1-\gamma^{-1}t)^{1-\beta}}dt. \label{eq:p1/2}
\end{align}
This is justified by noting that the left--most and right--most integrands both can be dominated by an integrable function, so the dominated convergece theorem allows us to interchange the limit and integral sign. The integrands both converge pointwise to $\frac{t^k}{(1-t)^2}e^{-\frac{t}{1-t}}$ as $p\to \frac{1}{2}$. The middle integral then follows by a change of variables.


\subsection{Asymptotic results}
In this section we analyse the exact expressions and show asymptotic results as $k\to \infty$. We leave out the critical case and refer the reader to (\ref{eq:critasymptotics}).
\subsubsection{Supercritical case}
\begin{theorem}
For $\frac{1}{2}<p<1$, asymptotically as $k\to \infty$ the degree sequence $(d_k)_{k=1}^{\infty}$ satisfies
\begin{align}d_k \sim \gamma\beta^{\beta}\Gamma(\beta+1) k^{-\beta}
\label{eq:supasymptotics}\end{align}
where $\beta=\frac{p}{2p-1}$ and $\gamma=\frac{1-p}{p}$.
\label{thm:asymptotics}
\end{theorem}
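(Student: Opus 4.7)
The plan is to start from the exact expression of Theorem \ref{thm:exact},
\begin{align}
d_k=\gamma\int_0^1\frac{t^k(1-t)^{\beta-1}}{(1-\gamma t)^{\beta+1}}\,dt,
\end{align}
and apply Laplace's method. Since the weight $t^k$ concentrates near $t=1$ as $k\to\infty$, the natural first move is the substitution $t=1-s/k$ (or equivalently $u=1-t$ followed by $u=s/k$), which gives
\begin{align}
d_k=\frac{\gamma}{k^{\beta}}\int_0^k\left(1-\frac{s}{k}\right)^{\!k}\frac{s^{\beta-1}}{\bigl(1-\gamma+\gamma s/k\bigr)^{\beta+1}}\,ds.
\end{align}

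Next I would take the pointwise limit of the integrand as $k\to\infty$: $\left(1-s/k\right)^k\to e^{-s}$ and $\bigl(1-\gamma+\gamma s/k\bigr)^{\beta+1}\to(1-\gamma)^{\beta+1}$, so the integrand converges to $s^{\beta-1}e^{-s}(1-\gamma)^{-(\beta+1)}$. To justify passing the limit under the integral sign I would invoke dominated convergence: for $0\le s\le k$ one has $(1-s/k)^k\le e^{-s}$, and since $\gamma\in(0,1)$ in the supercritical regime, $1-\gamma+\gamma s/k\ge 1-\gamma>0$, so the integrand is dominated by the integrable function $(1-\gamma)^{-(\beta+1)}s^{\beta-1}e^{-s}$ on $[0,\infty)$ (integrability at $0$ uses $\beta>0$, which holds precisely because $p>\tfrac12$).

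Therefore
\begin{align}
k^{\beta}d_k\;\longrightarrow\;\gamma\cdot\frac{1}{(1-\gamma)^{\beta+1}}\int_0^\infty s^{\beta-1}e^{-s}\,ds=\frac{\gamma\,\Gamma(\beta)}{(1-\gamma)^{\beta+1}}.
\end{align}
The last step is a bookkeeping identity: from $\beta=p/(2p-1)$ and $\gamma=(1-p)/p$ one computes $1-\gamma=(2p-1)/p=1/\beta$, so $(1-\gamma)^{-(\beta+1)}=\beta^{\beta+1}$. Hence
\begin{align}
d_k\sim \gamma\,\beta^{\beta+1}\Gamma(\beta)\,k^{-\beta}=\gamma\,\beta^{\beta}\Gamma(\beta+1)\,k^{-\beta},
\end{align}
which is exactly \eqref{eq:supasymptotics}.

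The only genuinely delicate point is producing a single dominating function valid on all of $[0,\infty)$; everything else is the standard Laplace-method routine together with the algebraic identity $1-\gamma=1/\beta$. I do not expect any real obstacle beyond making sure the dominating bound is written carefully (in particular, checking that $\beta>0$ is what guarantees integrability at $s=0$, so the argument really is special to the supercritical regime).
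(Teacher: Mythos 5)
Your argument is correct, and it reaches the stated constant by a route that differs in mechanics from the paper's. The paper works directly with the integral on $[0,1]$ and squeezes it: the upper bound replaces $(1-\gamma t)^{-(\beta+1)}$ by its maximum $(1-\gamma)^{-(\beta+1)}$ and evaluates the resulting Beta integral, giving $\frac{\gamma}{(1-\gamma)^{\beta+1}}\frac{\Gamma(k+1)\Gamma(\beta)}{\Gamma(k+\beta+1)}\sim \frac{\gamma\,\Gamma(\beta)}{(1-\gamma)^{\beta+1}}k^{-\beta}$; the matching lower bound restricts to $[1-1/\sqrt{k},1]$, where the same factor is nearly constant, and controls the discarded piece by $O(e^{-\sqrt{k}})$ (using $\beta\ge 1$ so that $(1-t)^{\beta-1}\le 1$ there). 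Your version instead rescales via $t=1-s/k$ and passes to the limit by dominated convergence, using $(1-s/k)^k\le e^{-s}$ and $1-\gamma+\gamma s/k\ge 1-\gamma>0$; this is clean and complete, needs no Gamma-ratio (Stirling-type) asymptotics and no explicit cutoff, and only requires $\beta>0$ rather than the $\beta\ge 1$ boundedness used in the paper's remainder estimate. What the paper's squeeze buys in exchange is an elementary, quantitative error bound without invoking measure-theoretic convergence theorems, and it recycles the same Beta-integral identity already used elsewhere. Your concluding bookkeeping $1-\gamma=1/\beta$, hence $(1-\gamma)^{-(\beta+1)}\Gamma(\beta)=\beta^{\beta}\Gamma(\beta+1)$, matches the paper's final step exactly, so no gaps remain.
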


\begin{proof}
Recall first the Beta integral
\begin{align}
\int_0^1 t^k(1-t)^{\beta-1}dt=\frac{\Gamma(k+1)\Gamma(\beta)}{\Gamma(k+\beta+1)}.
\end{align}

We bound the integral in Theorem \ref{thm:exact} from above and below. Estimating from above we find that

\begin{align}
 \gamma \int_0^1 \frac{t^k (1-t)^{\beta -1}}{(1-\gamma t)^{\beta +1}}dt 
& \leq \frac{\gamma}{(1-\gamma)^{\beta+1}} \int_0^1 t^k(1-t)^{\beta -1}dt \\
&=\frac{\gamma}{(1-\gamma)^{\beta+1}} \frac{\Gamma(k+1)\Gamma(\beta)}{\Gamma(k+\beta+1)} \\
&\sim  \frac{\gamma}{(1-\gamma)^{\beta+1}}\Gamma(\beta)k^{-\beta}.
\end{align}

In order to simplify notation, define 
\begin{align}
 A_k=\frac{\gamma}{\left(1-\gamma\left(1-1/\sqrt{k} \right) \right)^{\beta+1}}
\end{align}
and note that $A_k\to \frac{\gamma}{(1-\gamma)^{\beta+1}}$ as $k\to \infty$. We obtain the lower bound
\begin{align}
  \gamma\int_0^1 \frac{t^k (1-t)^{\beta -1}}{(1-\gamma t)^{\beta +1}}dt 
& \geq \gamma \int_{1-1/\sqrt{k}}^1\frac{t^k (1-t)^{\beta -1}}{(1-\gamma t)^{\beta +1}}dt \\
& \geq A_k\int_{1-1/\sqrt{k}}^1t^k(1-t)^{\beta-1}dt \\
& =A_k\left[\int_0^1 t^k(1-t)^{\beta-1}dt-\int_0^{1-1/\sqrt{k}}t^k(1-t)^{\beta-1}dt \right] \\
&=A_k\left[\frac{\Gamma(k+1)\Gamma(\beta)}{\Gamma (k+1+\beta)}-O\left(\int_0^{1-1/\sqrt{k}} t^kdt\right) \right] \\
&=A_k\left[\frac{\Gamma(k+1)\Gamma(\beta)}{\Gamma (k+1+\beta)}-O\left(e^{-\sqrt{k}} \right) \right] \\
&\sim \frac{\gamma}{(1-\gamma)^{\beta+1}}\Gamma(\beta)k^{-\beta}
\end{align}

The theorem is proved once we note that 
\begin{align}
 \frac{\gamma}{(1-\gamma)^{\beta+1}}\Gamma(\beta)k^{-\beta} = \gamma\beta^{\beta}\Gamma(\beta+1) k^{-\beta}.
\end{align}
\end{proof}

Alternatively, one could use a series expansion of the $\frac{1}{(1-\gamma t)^{\beta+1}}$--term and then apply Stirling's formula to achieve the same result. Estimations like these were done in \cite{CooperFriezeVera}.

Since $\beta=\frac{p}{2p-1}\in (1,\infty)$ when $\frac{1}{2}<p<1$, the supercritical case gives rise to a power--law with exponent greater than $1$.

\subsubsection{Subcritical case}
The proof of the following theorem is similar to Theorem \ref{thm:asymptotics}, and the proof is omitted.
\begin{theorem}\label{thm:subasymptotics}
 For $0<p<\frac{1}{2}$, asymptotically as $k\to \infty$ we have that 
\begin{align}
d_k\sim (-\beta)^{-1}(1-\beta)^{1-\beta}\Gamma(1-\beta) \gamma^{-k}k^{\beta} \label{eq:subasymptotics}
\end{align}
where $\beta=\frac{p}{2p-1}$ and $\gamma=\frac{1-p}{p}$.
\end{theorem}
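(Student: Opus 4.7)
The plan is to mirror the argument of Theorem \ref{thm:asymptotics}, starting from the exact representation given by Theorem \ref{thm:subexact}:
\begin{align*}
d_k = \gamma^{-k}\int_0^1 \frac{t^k(1-t)^{-1-\beta}}{(1-\gamma^{-1}t)^{1-\beta}}\,dt.
\end{align*}
In the subcritical regime $0<p<\tfrac{1}{2}$ we have $\beta<0$, so $-1-\beta>-1$ and the singularity of $(1-t)^{-1-\beta}$ at $t=1$ is integrable, while $\gamma^{-1}\in(0,1)$ keeps $1-\gamma^{-1}t$ bounded away from $0$. The guiding heuristic is identical to the supercritical case: as $k\to\infty$ the measure $t^k\,dt$ concentrates near $t=1$, so the smooth factor $(1-\gamma^{-1}t)^{-(1-\beta)}$ may be replaced by its value at $t=1$, namely $(1-\gamma^{-1})^{-(1-\beta)}$.

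To turn this into matching asymptotic bounds, I would exploit that, because $1-\beta>0$ and $t\mapsto 1-\gamma^{-1}t$ is strictly decreasing, the function $t\mapsto(1-\gamma^{-1}t)^{-(1-\beta)}$ is increasing on $[0,1]$. This yields the upper bound
\begin{align*}
\int_0^1 \frac{t^k(1-t)^{-1-\beta}}{(1-\gamma^{-1}t)^{1-\beta}}\,dt \leq (1-\gamma^{-1})^{-(1-\beta)}\int_0^1 t^k(1-t)^{-1-\beta}\,dt = (1-\gamma^{-1})^{-(1-\beta)}\frac{\Gamma(k+1)\Gamma(-\beta)}{\Gamma(k+1-\beta)}.
\end{align*}
For the matching lower bound I would restrict to $[1-1/\sqrt{k},1]$ and replace $(1-\gamma^{-1}t)^{-(1-\beta)}$ by its infimum on that interval, $(1-\gamma^{-1}(1-1/\sqrt{k}))^{-(1-\beta)}$, which converges to $(1-\gamma^{-1})^{-(1-\beta)}$. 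The discarded tail of the Beta integral on $[0,1-1/\sqrt{k}]$ is at most (a polynomial in $k$, coming from $(1-t)^{-1-\beta}$) times $(1-1/\sqrt{k})^{k+1}/(k+1)=O(e^{-\sqrt{k}})$, which is negligible against $\Gamma(-\beta)k^{\beta}$ obtained via Stirling's formula $\Gamma(k+1)/\Gamma(k+1-\beta)\sim k^{\beta}$. Sandwiching gives
\begin{align*}
\int_0^1 \frac{t^k(1-t)^{-1-\beta}}{(1-\gamma^{-1}t)^{1-\beta}}\,dt \sim (1-\gamma^{-1})^{-(1-\beta)}\Gamma(-\beta)\,k^{\beta}.
\end{align*}

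What remains is a short constant manipulation. A direct check gives $1-\gamma^{-1}=(1-2p)/(1-p)$ and $1-\beta=(1-p)/(1-2p)$, so $(1-\gamma^{-1})^{-1}=1-\beta$ and therefore $(1-\gamma^{-1})^{-(1-\beta)}=(1-\beta)^{1-\beta}$. Combining this with $\Gamma(-\beta)=\Gamma(1-\beta)/(-\beta)$ and the prefactor $\gamma^{-k}$ recovers the claimed
\begin{align*}
d_k\sim (-\beta)^{-1}(1-\beta)^{1-\beta}\Gamma(1-\beta)\,\gamma^{-k}k^{\beta}.
\end{align*}
The main obstacle is purely bookkeeping: the direction of monotonicity for $(1-\gamma^{-1}t)^{-(1-\beta)}$ is opposite to its supercritical counterpart (since $\gamma^{-1}<1$ now, rather than $\gamma<1$ there), and the tail bound on $(1-t)^{-1-\beta}$ has to be split according to the sign of $-1-\beta$; once these are handled, the argument is a cosmetic adaptation of Theorem \ref{thm:asymptotics}.
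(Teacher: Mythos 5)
Your proposal is correct and is exactly the adaptation of the sandwich argument of Theorem \ref{thm:asymptotics} that the paper intends (it omits the subcritical proof as ``similar''), including the right constant bookkeeping $(1-\gamma^{-1})^{-1}=1-\beta$ and $\Gamma(-\beta)=(-\beta)^{-1}\Gamma(1-\beta)$, and the correct polynomial-times-$O(e^{-\sqrt{k}})$ control of the discarded tail. One small aside is inaccurate: the weight $(1-\gamma^{-1}t)^{-(1-\beta)}$ is increasing on $[0,1]$ just as its supercritical counterpart $(1-\gamma t)^{-(\beta+1)}$ is, so the monotonicity direction is not ``opposite'' --- but since your upper and lower bounds use the monotonicity correctly, this does not affect the argument.
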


\begin{remark}
  If $0<p<\frac{1}{2}$ we have that $\gamma>1$, so the factor of $\gamma^{-k}$ forces most vertices to have very low degree. This is what one would expect, since on average there are more deletion steps (which reduce the size of a clique and create new isolated vertices) than duplication steps (which increase the size of a clique). 
\end{remark}

\subsection{Proof that $(d_k)_{k=1}^{\infty}$ defines a probability distribution}

Finally we prove the sequence $(d_k)_{k=1}^{\infty}$ defines a probability distribution. This is the final step in the proof of Theorem \ref{thm:as}.

\begin{lemma}
 Let $0<p<1$. The unique bounded sequence $(d_k)_{k=1}^{\infty}$ defined by
\begin{align}
d_0&=\frac{1-p}{p},  \label{def:prob0}  \\
d_k&=\frac{pkd_{k-1}+(1-p)kd_{k+1}}{k+p}, \quad k\geq 1 \label{def:probk}
\end{align} defines a probability distribution, i.e. \begin{align} \sum_{k=1}^{\infty}d_k=1.\end{align}
\label{lem:probdist}
\end{lemma}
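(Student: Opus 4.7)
The plan is to sum the recurrence \eqref{def:probk} from $k=1$ to $N$, obtain an identity for $S_N := \sum_{k=1}^N d_k$ in terms of two boundary terms, and let $N \to \infty$ using the asymptotics from Section~4.2 to kill those boundary terms. This is a single unified argument valid for every $0 < p < 1$; no separate evaluation of the integrals in Theorems~\ref{thm:exact} and \ref{thm:subexact} is needed.

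Concretely, rewriting \eqref{def:probk} as $(k+p)d_k - p k d_{k-1} - (1-p) k d_{k+1} = 0$ and summing from $k=1$ to $N$, I introduce $T_N := \sum_{k=1}^N k d_k$ and shift indices in the sums containing $d_{k-1}$ and $d_{k+1}$. A routine calculation shows that the $T_N$ contributions on the two sides cancel, leaving the identity
\begin{equation*}
(1-p) S_N \; = \; p \gamma - p(N+1) d_N + (1-p) N d_{N+1},
\end{equation*}
and since $p\gamma = 1-p$ this rearranges to
\begin{equation*}
S_N \; = \; 1 - \frac{p(N+1) d_N - (1-p) N d_{N+1}}{1-p}.
\end{equation*}

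Taking $N \to \infty$ then reduces the claim $S_N \to 1$ to checking that $k d_k \to 0$, which holds in each regime by the asymptotic results already proved. In the supercritical case Theorem~\ref{thm:asymptotics} gives $d_k \sim \gamma \beta^{\beta} \Gamma(\beta+1) k^{-\beta}$ with $\beta = p/(2p-1) > 1$, so $k d_k = O(k^{1-\beta}) \to 0$. In the subcritical case Theorem~\ref{thm:subasymptotics} gives exponential decay in $k$, making $k d_k \to 0$ immediate. In the critical case \eqref{eq:critasymptotics} gives $d_k \sim (e\pi)^{1/2} k^{1/4} e^{-2\sqrt{k}}$, which again decays fast enough. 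Letting $N \to \infty$ in the displayed identity then yields $\sum_{k=1}^\infty d_k = 1$.

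The only piece of the argument requiring care is the bookkeeping in the summation step: one must correctly re-index, identify the cancellation of $T_N$, and track exactly which boundary terms survive. Once this identity is in hand, the vanishing of the boundary contributions follows by inspection from the asymptotics. A less efficient alternative would be to insert the explicit integral expressions from Theorems~\ref{thm:exact} and \ref{thm:subexact}, interchange sum and integral by monotone convergence, and evaluate the resulting integrals via a M\"obius substitution such as $s = (1-\gamma) t/(1-\gamma t)$, but this requires treating each regime separately and is avoided by the route above.
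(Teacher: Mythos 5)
Your proposal is correct and follows essentially the same route as the paper: summing the recurrence to obtain the identity $S_N = 1 - \frac{1}{1-p}\left(p(N+1)d_N - (1-p)N d_{N+1}\right)$, which is exactly the paper's equation (\ref{eq:sum=1}), and then killing the boundary terms via the asymptotics (\ref{eq:supasymptotics}), (\ref{eq:subasymptotics}) and (\ref{eq:critasymptotics}), i.e.\ $d_n = o(n^{-1})$ in every regime. No changes needed.
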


\begin{proof}
Summing over $k=1,\dots n$ we find that
\begin{align}
\begin{split}
 \sum_{k=1}^{n}(k+p)d_k
&=(1-p)(1+d_2)+\sum_{k=2}^n k(pd_{k-1}+(1-p)d_{k+1}) \\
&=(1-p)(1+d_2)+p\sum_{k=1}^n(k+1)d_k+(1-p)\sum_{k=1}^n (k-1)d_k \\
& \qquad - (1-p)d_2-p(n+1)d_n+(1-p)nd_{n+1} \\
&=(1-p)+\sum_{k=1}^nkd_k+(2p-1)\sum_{k=1}^nd_k-p(n+1)d_n+(1-p)nd_{n+1} 
\end{split}
\end{align}
Hence
\begin{align}
 \sum_{k=1}^nd_k = 1 +\frac{1}{1-p}\left(-p(n+1)d_n+(1-p)nd_{n+1} \right).\label{eq:sum=1}
\end{align}
The asymptotics in (\ref{eq:critasymptotics}), (\ref{eq:supasymptotics}) and (\ref{eq:subasymptotics}) imply that $d_n=o(n^{-1})$ for any $0<p<1$. Hence the sum in (\ref{eq:sum=1}) tends to $1$ as $n \to \infty$. 
\end{proof}

\begin{remark}
The above proof shows that any sequence satisfying (\ref{def:prob0}) and (\ref{def:probk}) will sum to $1$ under the assumption that $d_n=o(n^{-1})$. Alternatively one could prove this directly from the exact expressions. For instance, in the supercritical case, it is not difficult to show that
\begin{align}
 \sum_{k=1}^{\infty}d_k=\sum_{k=0}^{\infty}d_k-d_0=\sum_{k=0}^{\infty}\gamma\int_0^1\frac{t^k(1-t)^{\beta-1}}{(1-\gamma t)^{\beta+1}}dt-\gamma=1
\end{align}
by interchanging the summation sign and the integral sign, and using the geometric series.
\end{remark}

\section*{Acknowledgements} The author thanks Professor Svante Janson for suggesting the topic and for providing plenty of helpful ideas and suggestions to improve the present manuscript.

\bibliographystyle{plain}
\bibliography{arxiv}

\end{document}